\newtheorem{thm}{Theorem}
\newtheorem{lem}[thm]{Lemma}
\newtheorem{conj}[thm]{Conjecture}
\numberwithin{thm}{section}
\numberwithin{equation}{section}
\theoremstyle{plain}
\theoremstyle{definition}
\newcommand{\Z}{\mathbb{Z}}
\begin{document}

\title{Special Sets of Primes in Function Fields}

\author{Julio Andrade}
\address{ICERM, Brown University, Providence, RI 02903}
\email{\textcolor{blue}{\href{mailto:julioandrade@brown.edu}{julio\underline{\ }andrade@brown.edu}}}

\author{Steven J. Miller}
\address{Department of Mathematics \& Statistics, Williams College, Williamstown, MA 01267}
\email{\textcolor{blue}{\href{mailto:sjm1@williams.edu}{sjm1@williams.edu}}, \textcolor{blue}{\href{mailto:Steven.Miller.MC.96@aya.yale.edu}{Steven.Miller.MC.96@aya.yale.edu}}}

\author{Kyle Pratt}
\address{Department of Mathematics, Brigham Young University, Provo, UT 84602}
\email{{\textcolor{blue}{\href{mailto:kyle.pratt@byu.net}{kyle.pratt@byu.net}}},{\textcolor{blue}{\href{mailto:kvpratt@gmail.com}{kvpratt@gmail.com}}}}

\author{Minh-Tam Trinh}
\address{Department of Mathematics, Princeton University, Princeton, NJ 08544}
\email{\textcolor{blue}{\href{mailto:mtrinh@princeton.edu}{mtrinh@princeton.edu}}, \textcolor{blue}{\href{mailto:mqtrinh@gmail.com}{mqtrinh@gmail.com}}}

\subjclass[2010]{11T06}

\keywords{Primes, Function Field, Irreducible Polynomials.}

\thanks{The first, third and fourth authors were supported by NSF grant DMS0850577, and the second named author was partially supported by NSF grants DMS0970067 and DMS1265673; the first named author was also partially supported by an ICERM - Brown University Postdoctoral Research Fellowship. We thank Kevin Ford for conversations on the classical case, which led to this research.}

\begin{abstract} When investigating the distribution of the Euler totient function, one encounters sets of primes $\mathcal{P}$ where if $p\in\mathcal{P}$ then $r\in\mathcal{P}$ for all $r|(p-1)$. While it is easy to construct finite sets of such primes, the only infinite set known is the set of all primes. We translate this problem into the function field setting and construct an infinite such set in  $\mathbb{F}_p[x]$ whenever $p \equiv 2$ or $5$ modulo 9.
\end{abstract}

\date{}

\maketitle

\section{Introduction}

Irreducible polynomials over finite fields have applications in cryptography and coding theory \cite{GM94}. Accordingly, it is sometimes useful to be able to produce irreducible polynomials of arbitrarily high degree. Irreducible polynomials over finite fields feature in the Advanced Encryption Standard, and are useful for, among other things, constructing linear feedback shift registers \cite[Chapter 16]{Sch96}. The integer analogy is finding large prime integers to use in some kind of cryptographic application, such as RSA.

Recall that the Euler totient function is given by $\phi(n) = \# \{1 \le k \le n : (k,n)=1\}$. Lehmer's \emph{totient problem} (also sometimes called the \emph{Carmichael conjecture}) asks whether there exists an integer $n_0$ such that $\phi(n)=\phi(n_0)$ implies $n=n_0$. Ford \cite{Ford98} proved that any such $n_0$ must be greater than $10^{10^{10}}$. The general strategy of proof (first initiated by Carmichael \cite{Car22}) is to construct two large set of primes $\mathcal{P}$ and $\mathcal{P}'$ such that every $p$ in $\mathcal{P}$ or $\mathcal{P}'$ is a divisor of a potential $n_0$. If one could show the sets of primes $\mathcal{P}$ and $\mathcal{P}'$ to be infinite, it would follow that no such $n_0$ exists (see \cite{Ford13} for more details). These sets of primes both have the property that if $p$ is in the set and $r|(p-1)$ with $r$ a prime, then $r$ is in the set. Obviously finite sets of this form are easy to construct, and the set of all primes is an infinite set with this property. It is unknown whether there are any nontrivial infinite sets of primes with this property.

Though this question is rather formidable over the integers, it becomes more tractable if we consider the rings of integers of function fields instead. For $q$ a prime power let $\mathbb{F}_q$ be the finite field with $q$ elements. We consider monic, irreducible polynomials in $\mathbb{F}_q[x]$ to be the appropriate analogue of prime numbers, as every monic polynomial can be factored as a product of these polynomials (see \cite{Ros02} for more details). In the function field setting, the relationship between a special infinite set of irreducible polynomials and the Euler totient function is lost, as here the Euler totient function is relatively simple. However, progress towards the problem in the function field setting could shed light on the integer side, and at the very least would indicate the feasibility of such a set $\mathcal{P}$.

We now formulate the central question of study. Let $P$ and $Q$ be monic, irreducible polynomials in $\mathbb{F}_q[x]$ and let $P$ have constant term $\alpha_P$. Consider a set $\mathcal{S} \subset \mathbb{F}_q[x]$ such that if $P \in \mathcal{S}$ and $Q|(P-\alpha_P)$, then $Q \in \mathcal{S}$. Call such a set an $F$-set (think $F$ for \emph{factor}). The requirement that we subtract the constant term of $P$ rather than just 1 is to maintain analogy with the integers. When $p$ is an odd prime, $p-1$ is never a prime since $p-1$ is divisible by 2. Similarly, the polynomial $P-\alpha_P$ is never irreducible, because it is always divisible by $x$.

As with the integer case, finite sets and the set of all monic and irreducible polynomials are easy examples of $F$-sets. On first thought it is unclear whether nontrivial, infinite $F$-sets could exist. The following theorem resolves this question for certain $p$.

\begin{thm}\label{mainTheorem}
Let $p \in \Z$ be a prime such that $p \equiv 2$ or $5$ modulo $9$. Then there is a nontrivial infinite $F$-set in $\mathbb{F}_p[x]$.
\end{thm}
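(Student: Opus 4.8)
The plan is to build $\mathcal{S}$ entirely out of cyclotomic polynomials, exploiting the fact that the congruence condition on $p$ is precisely an irreducibility criterion. First I would recall that for $p \nmid n$ the $n$-th cyclotomic polynomial $\Phi_n$ factors over $\mathbb{F}_p$ into distinct irreducibles, all of degree equal to the multiplicative order of $p$ modulo $n$; in particular $\Phi_n$ is irreducible over $\mathbb{F}_p$ exactly when $p$ is a primitive root modulo $n$. Since $(\mathbb{Z}/9\mathbb{Z})^\times$ is cyclic of order $6$ and its generators are the residues $2$ and $5$, the hypothesis $p \equiv 2$ or $5 \pmod 9$ says exactly that $p$ is a primitive root modulo $9$. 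By the classical lifting result that a primitive root modulo $3^2$ is a primitive root modulo $3^n$ for every $n \ge 1$, $p$ is then a primitive root modulo $3^n$ for all $n$; and since $p$ is odd it is also a primitive root modulo $2\cdot 3^n$. Hence both $\Phi_{3^n}$ and $\Phi_{2\cdot 3^n}$ are irreducible over $\mathbb{F}_p$ for every $n \ge 1$, giving two infinite families of monic irreducibles.

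Next I would record the explicit shapes $A_n := \Phi_{3^n} = x^{2\cdot 3^{n-1}} + x^{3^{n-1}} + 1$ and $B_n := \Phi_{2\cdot 3^n} = x^{2\cdot 3^{n-1}} - x^{3^{n-1}} + 1$, each of constant term $1$, together with $x$, $x-1 = \Phi_1$, and $x+1 = \Phi_2$. The crucial computation is to factor $A_n - 1$ and $B_n - 1$ using the standard identities $x^m - 1 = \prod_{d \mid m}\Phi_d$ and $\Phi_{2m}(x) = \Phi_m(-x)$ for odd $m>1$. These give
$$A_n - 1 = x^{3^{n-1}}\bigl(x^{3^{n-1}}+1\bigr) = x^{3^{n-1}}(x+1)\prod_{j=1}^{n-1}B_j, \qquad B_n - 1 = x^{3^{n-1}}\bigl(x^{3^{n-1}}-1\bigr) = x^{3^{n-1}}(x-1)\prod_{i=1}^{n-1}A_i,$$
so that the monic irreducible factors of $A_n - \alpha_{A_n}$ lie in $\{x, x+1, B_1, \dots, B_{n-1}\}$ and those of $B_n - \alpha_{B_n}$ lie in $\{x, x-1, A_1, \dots, A_{n-1}\}$.

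I would then set $\mathcal{S} = \{x,\, x-1,\, x+1\} \cup \{A_n : n \ge 1\} \cup \{B_n : n \ge 1\}$ and verify the $F$-set property case by case: each of the three linear polynomials satisfies $P - \alpha_P = x$, and the displayed factorizations show that every irreducible factor of $A_n - \alpha_{A_n}$ and of $B_n - \alpha_{B_n}$ already belongs to $\mathcal{S}$. Since $\mathcal{S}$ contains $A_n$ of unbounded degree $2\cdot 3^{n-1}$ it is infinite, and it omits, for instance, every linear polynomial $x - a$ with $a \ne 0, \pm 1$ (when $p = 2$, where no such $a$ exists, it instead omits every degree-$3$ irreducible, as $\mathcal{S}$ contains no element of degree $3$); hence $\mathcal{S}$ is a proper, nontrivial $F$-set.

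I expect the main obstacle to be twofold. First, one must correctly identify the modular condition as exactly the irreducibility criterion for $\Phi_9$ and ensure it propagates to all $\Phi_{3^n}$ and $\Phi_{2\cdot 3^n}$ via the primitive-root lifting lemma; here one also checks $p \ne 3$, which the hypothesis guarantees, so that the relevant cyclotomic polynomials remain separable over $\mathbb{F}_p$. Second, and more delicately, one must verify that the two families close up under the factor relation without ever generating a new polynomial outside $\mathcal{S}$ — this is precisely what the telescoping factorizations accomplish, and getting the signs right, together with the bookkeeping in $\Phi_{2m}(x) = \Phi_m(-x)$, is the subtle part of the argument.
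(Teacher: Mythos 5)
Your construction is essentially identical to the paper's: the paper's $f_\ell = f_0(x^{3^\ell})$ and $g_\ell = g_0(x^{3^\ell})$ with $f_0 = x^2+x+1$, $g_0 = x^2-x+1$ are exactly your $\Phi_{3^{\ell+1}}$ and $\Phi_{2\cdot 3^{\ell+1}}$, and the set, the telescoping factorizations of $A_n-1$ and $B_n-1$, and the non-triviality argument (including the degree-$3$ trick for $p=2$) all match, the only difference being that the paper gets irreducibility from the Lidl--Niederreiter criterion for $f(x^t)$ rather than from the primitive-root characterization of irreducible cyclotomic polynomials. One loose end: your irreducibility argument for $\Phi_{2\cdot 3^n}$ invokes ``since $p$ is odd,'' which fails for $p=2$ (there $2 \mid 2\cdot 3^n$, so the primitive-root criterion does not apply); you should add that $\Phi_{2\cdot 3^n} \equiv \Phi_{3^n} \pmod{2}$, so irreducibility still holds in that case.
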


More generally, we believe the following is true.

\begin{conj}
For any finite field $\mathbb{F}_q$, there exists a non-trivial, infinite $F$-set.
\end{conj}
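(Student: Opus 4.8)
The plan is to treat Theorem~\ref{mainTheorem} as a template and extend its construction to an arbitrary $\mathbb{F}_q$, using binomials wherever possible and cyclotomic towers for the residual cases. The engine behind Theorem~\ref{mainTheorem} is the tower $\Phi_{3^k}(x)=x^{2\cdot 3^{k-1}}+x^{3^{k-1}}+1$: each $\Phi_{3^k}$ is irreducible over $\mathbb{F}_p$ exactly when $p$ is a primitive root modulo $3^k$, which holds for all $k$ once $p\equiv 2,5\pmod 9$ (a primitive root modulo $3^2$ lifts to one modulo every $3^k$), and the key identity $\Phi_{3^k}(x)-1=x^{3^{k-1}}\bigl(x^{3^{k-1}}+1\bigr)$ produces only the factor $x$ and the cyclotomic polynomials $\Phi_{2\cdot 3^{j}}$, which are themselves irreducible and whose own $F$-factors are again members of the family. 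So the two features to reproduce over a general $\mathbb{F}_q$ are an infinite tower of irreducibles together with a factorization of $P-\alpha_P$ that never escapes a closed family.

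First I would dispose of the generic case with binomials. If $a$ generates $\mathbb{F}_q^{\times}$, then $x^{n}-a$ is irreducible whenever every prime factor of $n$ divides $\mathrm{ord}(a)=q-1$ and $q\equiv 1\pmod 4$ in case $4\mid n$; taking $n=\ell^{k}$ for $\ell$ an odd prime divisor of $q-1$, or $\ell=2$ when $q\equiv 1\pmod 4$, gives an irreducible binomial for every $k$. Since $(x^{\ell^{k}}-a)-\alpha_P=x^{\ell^{k}}$, the only monic irreducible factor is $x$, so $\{x\}\cup\{x^{\ell^{k}}-a:k\ge 1\}$ is at once a nontrivial infinite $F$-set. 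This covers every $q$ except those for which $q-1$ is a power of $2$ and $q\not\equiv 1\pmod 4$, namely $q=2$ and $q=3$; and $q=2$ is already Theorem~\ref{mainTheorem}, since $2\equiv 2\pmod 9$.

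The remaining case $q=3$ is where I expect the real difficulty, and it is also the model for why the general conjecture is delicate. The natural replacement is the tower $\Phi_{5^{k}}$, legitimate because $3$ is a primitive root modulo $5^2$ and hence modulo every $5^k$; but here $\Phi_{5^{k}}(x)-1=x^{5^{k-1}}\bigl(x^{4\cdot 5^{k-1}}-1\bigr)/\bigl(x^{5^{k-1}}-1\bigr)$ drags in the cyclotomic polynomials $\Phi_{2\cdot 5^{j}}$ and $\Phi_{4\cdot 5^{j}}$, and the latter do not all remain irreducible over $\mathbb{F}_3$, since $(\mathbb{Z}/4\cdot 5^{j}\mathbb{Z})^{\times}$ is noncyclic and each $\Phi_{4\cdot 5^{j}}$ (for $j\ge 1$) splits into two irreducible factors. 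The crux is therefore to enlarge the family to contain these split factors and to verify that the enlargement is still closed under $P\mapsto$(irreducible factors of $P-\alpha_P$), which forces one to analyze the $F$-operation factor by factor rather than on whole cyclotomic polynomials. For a single uniform argument over all $q$ one would want, for each $q$, a modulus $m$ prime to the characteristic such that $q$ generates a large subgroup of $(\mathbb{Z}/m\mathbb{Z})^{\times}$ and $\Phi_m-1$ factors within a closed family; producing such $m$ is an Artin-primitive-root-flavored question, and it is precisely the appearance of reducible cyclotomic factors once $\ell>3$ that separates the clean regime $p\equiv 2,5\pmod 9$ from the full conjecture.
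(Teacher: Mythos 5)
The statement you are proving is the paper's Conjecture, which the authors explicitly leave open: the paper proves only Theorem \ref{mainTheorem} (the case $q=p\equiv 2$ or $5\pmod 9$) and gives no argument for general $q$, so there is no proof in the paper to compare against. Judged on its own, your proposal is a correct and genuinely new partial advance, but it is not a proof of the conjecture. The binomial step is sound: by the standard irreducibility criterion for binomials (Theorem 3.75 of \cite{LN97}), if $a$ generates $\mathbb{F}_q^{\times}$ and $\ell$ is a prime dividing $q-1$, with $q\equiv 1\pmod 4$ in case $\ell=2$, then every $x^{\ell^k}-a$ is irreducible; since $(x^{\ell^k}-a)-(-a)=x^{\ell^k}$, whose only monic irreducible factor is $x$, the set $\{x\}\cup\{x^{\ell^k}-a\}_{k\ge 1}$ is an infinite $F$-set, and it is nontrivial because it omits, say, $x-1$. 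The only fields this misses are those with $q-1$ having no odd prime factor and $q\not\equiv 1\pmod 4$, namely $q=2$ and $q=3$, and $q=2$ is covered by Theorem \ref{mainTheorem}. This is cleaner and far more general than the paper's cyclotomic tower, and it would be worth recording.

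The gap is $q=3$, and it is a real one rather than a formality. Your tower $\{\Phi_{5^k}\}$ is indeed irreducible over $\mathbb{F}_3$ (3 is a primitive root modulo $5^2$, hence modulo every $5^k$), and the factorization $\Phi_{5^k}(x)-1=x^{5^{k-1}}\prod_{j=0}^{k-1}\Phi_{2\cdot 5^j}(x)\,\Phi_{4\cdot 5^j}(x)$ is correct; but, as you observe, each $\Phi_{4\cdot 5^j}$ with $j\ge 1$ splits into two irreducible factors of degree $4\cdot 5^{j-1}$ because $(\mathbb{Z}/4\cdot 5^j\mathbb{Z})^{\times}$ is not cyclic. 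To obtain an $F$-set you must adjoin those factors, and then for each adjoined $Q$ control the irreducible factors of $Q-\alpha_Q$; these are no longer cyclotomic, nothing in your argument constrains them, and the $F$-closure could in principle cascade until it contains every monic irreducible, i.e., becomes trivial. You correctly diagnose the obstruction but do not exhibit a closed nontrivial family over $\mathbb{F}_3$, so the conjecture remains open; what you have is a complete proof of the (substantial) special case $q\ne 3$.
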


\section{Proof of Main Result}

To begin the proof of Theorem \ref{mainTheorem}, we recall the definition of the order of a polynomial. The order of a polynomial $f$ is the smallest positive integer $e$ such that $f|(x^e-1)$; see Lemma 3.1 of \cite{LN97} for a proof of the existence of the order for every polynomial. We also require the following theorem (we state the theorem in a somewhat weaker form, as this suffices for our purposes).

\begin{thm}[\cite{LN97}, Theorem 3.35]\label{polyConstructThm}
Let $q$ be a prime power. Let $f(x)$ be an irreducible polynomial in $\mathbb{F}_q[x]$ of degree $m$ and order $e$. Let $t$ be a positive integer such that the prime factors of $t$ divide $e$ but not $\frac{q^m-1}{e}$. Assume also that $q^m \equiv 1$ mod 4 if $t \equiv 0$ mod 4. Then $f(x^t)$ is irreducible in $\mathbb{F}_q[x]$.
\end{thm}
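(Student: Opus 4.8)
The plan is to reduce the irreducibility of $f(x^t)$ over $\mathbb{F}_q$ to the irreducibility of a single binomial over the splitting field $\mathbb{F}_{q^m}$ of $f$. Let $\alpha$ be a root of $f$ in a fixed algebraic closure. Since $f$ is irreducible of degree $m$, we have $\mathbb{F}_q(\alpha) = \mathbb{F}_{q^m}$, and the multiplicative order of $\alpha$ in $\mathbb{F}_{q^m}^\times$ equals the order $e$ of the polynomial $f$ (the standard identification of the order of an irreducible polynomial with the order of its roots). Because $f(x^t)$ is monic of degree $mt$, it suffices to exhibit a \emph{single} root $\gamma$ of $f(x^t)$ with $[\mathbb{F}_q(\gamma):\mathbb{F}_q] = mt$: the minimal polynomial of such a $\gamma$ then has degree $mt$, divides $f(x^t)$, and hence equals $f(x^t)$, forcing irreducibility.

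First I would carry out the degree computation through a tower. Fix a root $\gamma$ of $f(x^t)$; then $\gamma^t$ is a root of $f$, say $\gamma^t = \alpha$ after relabeling. Since $\alpha = \gamma^t \in \mathbb{F}_q(\gamma)$ and $\mathbb{F}_q(\alpha) = \mathbb{F}_{q^m}$, we get $\mathbb{F}_{q^m} \subseteq \mathbb{F}_q(\gamma)$, so $\mathbb{F}_q(\gamma) = \mathbb{F}_{q^m}(\gamma)$, and the tower law gives
\[
[\mathbb{F}_q(\gamma):\mathbb{F}_q] \;=\; [\mathbb{F}_{q^m}(\gamma):\mathbb{F}_{q^m}]\cdot[\mathbb{F}_{q^m}:\mathbb{F}_q] \;=\; m\cdot[\mathbb{F}_{q^m}(\gamma):\mathbb{F}_{q^m}].
\]
Thus the theorem reduces to showing $[\mathbb{F}_{q^m}(\gamma):\mathbb{F}_{q^m}] = t$, i.e.\ that the binomial $x^t - \alpha$ is irreducible over $\mathbb{F}_{q^m}$.

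The heart of the argument is therefore the classical irreducibility criterion for binomials: for $a \in \mathbb{F}_Q^\times$ of multiplicative order $d$, the binomial $x^t - a$ is irreducible over $\mathbb{F}_Q$ if and only if every prime divisor of $t$ divides $d$ but not $(Q-1)/d$, together with the condition $Q \equiv 1 \pmod 4$ whenever $4 \mid t$. Applying this with $Q = q^m$ and $d = e$, the hypotheses become exactly those imposed in the statement, yielding the irreducibility of $x^t - \alpha$ over $\mathbb{F}_{q^m}$. To prove the criterion itself I would factor $t$ into prime powers and reduce to the case $t = \ell^k$; here the condition ``$\ell \mid d$ and $\ell \nmid (Q-1)/d$'' is equivalent, via the cyclicity of $\mathbb{F}_Q^\times$, to the statement that $a$ is not an $\ell$-th power in $\mathbb{F}_Q^\times$, and one then computes the multiplicative order of a root of $x^{\ell^k} - a$ inside the cyclic group of the appropriate extension to see that this root has degree exactly $\ell^k$.

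The step I expect to be the main obstacle is the prime $\ell = 2$, equivalently the case $4 \mid t$. For odd $\ell$ the order computation is a clean induction on $k$ using only that $a$ is a non-$\ell$-th power. For $\ell = 2$, however, $x^4 - a$ can split even when $a$ is a nonsquare, because a quadratic obstruction governed by whether $-1$ is a square in $\mathbb{F}_Q$ may appear; ruling this out is precisely what the extra hypothesis ``$Q \equiv 1 \pmod 4$ when $4 \mid t$'' (i.e.\ $q^m \equiv 1 \bmod 4$) guarantees, since $-1 \in (\mathbb{F}_Q^\times)^2$ exactly when $Q \equiv 1 \pmod 4$. Handling this delicate dyadic case correctly, and confirming that all conjugate roots $\alpha^{q^j}$ of $f$ share the order $e$ so that the choice of $\gamma$ above is immaterial, are the only places requiring genuine care; everything else is the tower bookkeeping sketched above.
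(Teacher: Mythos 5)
The paper never proves this statement at all: it is quoted (in a deliberately weakened form) from Lidl--Niederreiter \cite{LN97}, so the only meaningful comparison is with the proof in that source. Your outline is correct, but it runs in the opposite direction from the cited treatment. There, Theorem 3.35 is proved directly: each root $\gamma$ of $f(x^t)$ satisfies $\gamma^t = \alpha$ with $\alpha$ a root of $f$ of multiplicative order $e$; the hypotheses on $t$ force $\gamma$ to have order exactly $et$; and a number-theoretic lemma shows that the multiplicative order of $q$ modulo $et$ equals $mt$, so the minimal polynomial of $\gamma$ has degree $mt$ and must coincide with $f(x^t)$. The binomial criterion you invoke is then \emph{deduced} there (their Theorem 3.75) as the special case $m=1$, $f(x)=x-a$. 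You invert this: you take the binomial criterion over $\mathbb{F}_{q^m}$ as the engine and lift it through the tower $\mathbb{F}_q \subset \mathbb{F}_{q^m} \subset \mathbb{F}_{q^m}(\gamma)$. Your tower bookkeeping is airtight (including the remark that Frobenius preserves multiplicative order, so the choice of root of $f$ is immaterial, and the endgame that a single root of degree $mt$ forces irreducibility), and your identification of the dyadic case as the delicate point is exactly right: when $Q \equiv 3 \pmod 4$ every nonsquare $a \in \mathbb{F}_Q^\times$ can be written as $-4c^4$, so $x^4 - a$ splits by the identity $x^4 + 4c^4 = (x^2 - 2cx + 2c^2)(x^2 + 2cx + 2c^2)$, which is precisely the obstruction that the hypothesis $q^m \equiv 1 \pmod 4$ removes. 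Two cautions if you flesh this out. First, you cannot simply cite \cite{LN97} Theorem 3.75 for the binomial criterion, since in that book it is itself derived from the theorem you are proving; you must carry out your sketched Capelli-style argument independently, which is possible but is where all the real work lives. Second, in the prime-power induction the defining condition must be verified to propagate: for odd $\ell$, if $a$ has order $d$ with $v_\ell(d) = v_\ell(Q-1) \geq 1$, one needs $v_\ell(Q^\ell - 1) = v_\ell(Q-1) + 1$ so that a root of $x^\ell - a$ again satisfies the hypothesis one level up; your sketch gestures at this but does not check it, and it is exactly the step that fails for $\ell = 2$ when $Q \equiv 3 \pmod 4$. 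What your route buys is a conceptually clean reduction to a one-variable statement over the big field; what the source's route buys is that the order computation modulo $et$ handles all primes, including $2$, uniformly in a single lemma.
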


We now construct the elements that form our nontrivial infinite $F$-set in $\mathbb{F}_p[x]$. In the course of our arguments we see why we must restrict $p$ to be congruent to $2$ or $5$ modulo 9.

\begin{lem} Let $p \equiv 2$ or $5$ modulo 9, $f_0 = x^2+x+1$ and set $f_\ell(x) = f_0(x^{3^\ell})$. Then $f_\ell$ is a monic, irreducible polynomial in $\mathbb{F}_p[x]$ for all non-negative integers $\ell$. If $g_0(x)=x^2-x+1$ and $g_\ell(x)=g_0(x^{3^\ell})$ then $g_\ell$ is a monic, irreducible polynomial in $\mathbb{F}_p[x]$ for all  non-negative integers $\ell$. \end{lem}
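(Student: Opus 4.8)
The plan is to apply Theorem~\ref{polyConstructThm} iteratively, building $f_\ell$ from $f_{\ell-1}$ by the substitution $x \mapsto x^3$. Since $f_\ell(x) = f_0(x^{3^\ell}) = f_{\ell-1}(x^3)$, it suffices to verify the hypotheses of the substitution theorem with $t = 3$ at each stage, using the irreducibility of $f_{\ell-1}$ as the inductive input. So I would induct on $\ell$, with the base case being the irreducibility of $f_0 = x^2+x+1$ over $\mathbb{F}_p$. The analysis for the $g_\ell$ is entirely parallel, so I would treat $f_0 = x^2+x+1$ in detail and remark that $g_0 = x^2-x+1$ proceeds the same way.

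\emph{Base case.} First I would check that $f_0 = x^2+x+1$ is irreducible over $\mathbb{F}_p$. The roots of $f_0$ are the primitive cube roots of unity, so $f_0$ is irreducible precisely when $\mathbb{F}_p$ contains no primitive cube root of unity, i.e.\ when $3 \nmid (p-1)$, equivalently $p \not\equiv 1 \pmod 3$. Since $p \equiv 2$ or $5 \pmod 9$ forces $p \equiv 2 \pmod 3$, this holds. For $g_0 = x^2-x+1$ the roots are primitive sixth roots of unity, and one checks the same congruence condition governs irreducibility (over $\mathbb{F}_p$ with $p$ odd, a primitive sixth root exists iff a primitive cube root does).

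\emph{Inductive step.} Assuming $f_{\ell-1}$ is irreducible of degree $m = 2 \cdot 3^{\ell-1}$, I must compute its order $e$ and confirm the three hypotheses of Theorem~\ref{polyConstructThm} with $q = p$, $t = 3$. The key is to determine $e = \mathrm{ord}(f_{\ell-1})$, equivalently the multiplicative order in $\mathbb{F}_{p^m}^\times$ of a root of $f_{\ell-1}$; since a root is a primitive $3^\ell$-th root of unity (the roots of $f_\ell$ being $3^\ell$-th roots of the primitive cube roots of unity), I expect $e = 3^\ell$. The conditions to verify are then: (i) the prime factors of $t = 3$ divide $e = 3^\ell$ --- immediate; (ii) $3 \nmid \frac{p^m - 1}{e}$; and (iii) the $t \equiv 0 \pmod 4$ hypothesis is vacuous since $t = 3$. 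The main obstacle is condition (ii): I must show $3$ does not divide $\frac{p^{m}-1}{3^\ell}$, i.e.\ that the exact power of $3$ dividing $p^m - 1$ is precisely $3^\ell$. This is where the congruence $p \equiv 2, 5 \pmod 9$ enters, and I would control it via the lifting-the-exponent lemma: with $m = 2\cdot 3^{\ell-1}$, one has $v_3(p^m - 1) = v_3(p^2 - 1) + v_3(3^{\ell-1}) = v_3(p^2-1) + (\ell-1)$, so I need $v_3(p^2 - 1) = 1$. The hypothesis $p \equiv 2, 5 \pmod 9$ means $p^2 \equiv 4, 7 \pmod 9$, hence $p^2 \not\equiv 1 \pmod 9$ while $p^2 \equiv 1 \pmod 3$, giving exactly $v_3(p^2-1) = 1$, as needed. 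Confirming both the order computation and this $3$-adic valuation is the crux of the argument; once they are in hand, Theorem~\ref{polyConstructThm} delivers the irreducibility of $f_\ell$ and the induction closes.
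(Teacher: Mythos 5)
Your proof is correct, but it takes a genuinely different route from the paper's. You apply Theorem~\ref{polyConstructThm} inductively, $\ell$ times, each time with $t=3$, passing from $f_{\ell-1}$ to $f_\ell = f_{\ell-1}(x^3)$; this forces you to determine the order of each intermediate polynomial $f_{\ell-1}$ (correctly identified as $3^\ell$, since its roots are primitive $3^\ell$-th roots of unity) and to control $v_3\bigl(p^{2\cdot 3^{\ell-1}}-1\bigr)$ via lifting-the-exponent. The paper instead applies the theorem exactly once, directly to $f_0$ with $t = 3^\ell$: since $f_0$ has order $e=3$ and the only prime factor of $t$ is $3$, the only condition to check is $3 \nmid \frac{p^2-1}{3}$, i.e.\ $p^2 \not\equiv 1 \pmod 9$, which is precisely where $p \equiv 2, 5 \pmod 9$ enters. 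Both arguments reduce to the same arithmetic fact $v_3(p^2-1)=1$, and your LTE computation is the inductive shadow of the paper's one-line congruence check; the paper's single application is shorter because it never needs the orders of the intermediate polynomials, while your version makes the structure of the roots at each stage explicit. One small caveat on your treatment of the $g_\ell$: the analysis is parallel but not identical, since $g_0$ has order $6$ rather than $3$ (for odd $p$), so the order of $g_{\ell-1}$ is $2\cdot 3^{\ell}$; the prime factor $3$ of $t$ still divides this and $v_3$ of the order is still $\ell$, so the same valuation argument closes, but this deserves a sentence rather than being absorbed into ``entirely parallel.''
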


\begin{proof} Suppose $p \equiv 2 \bmod 3$. Then $f_0(x)=x^2+x+1$ is irreducible in $\mathbb{F}_p[x]$. If $f_0(x)$ were reducible, there would be a root $r \in \mathbb{F}_p$ such that \begin{equation} 0 \ \equiv \ r^2 + r + 1 \ \equiv \  (r-1)(r^2+r+1) \ \equiv \ r^3 - 1 \bmod p, \end{equation} with $r \not\equiv 1$ as $3 \not\equiv 0 \bmod p$. Then the order of $r$ in $\mathbb{F}_p^\times$ is 3, but as the order of $\mathbb{F}_p^\times$ is $p-1 \equiv 1 \bmod 3$, such an $r$ is impossible.

The order of $f_0(x)$ is $e=3$, since $f_0(x)|(x^3-1)$. In order for the conditions of Theorem \ref{polyConstructThm} to hold, we need $t=3^\ell$ and $t$ coprime to $\frac{p^2-1}{3}$. This is equivalent to the condition $p^2 \not \equiv 1 \pmod{9}$. Since $p \equiv 2 \bmod 3$, this excludes $p \equiv 8 \bmod 9$. Therefore, by Theorem \ref{polyConstructThm}, the polynomials $f_\ell(x)=f_0(x^{3^\ell})$ are irreducible when $p \equiv 2$ or $5$ modulo $9$ (and are clearly monic).

We similarly see that $g_0(x)=x^2-x+1$ is also a monic, irreducible polynomial in $\mathbb{F}_p[x]$. By Theorem \ref{polyConstructThm} the polynomials $g_\ell(x)=g_0(x^{3^\ell})$ are monic and irreducible since $g_0(x)$ has order 6 and $p \equiv 2,5 \pmod{9}$. \end{proof}

A straightforward induction argument proves the following.

\begin{lem}\label{lem:factorizationsfg}
We have the following factorization:
\begin{eqnarray}
f_\ell(x)-1 & \ = \ & x^{3^\ell}(x+1)g_0(x)g_1(x)\cdots g_{\ell-1}(x) \nonumber\\
g_{\ell}(x)-1 & =  & x^{3^\ell}(x-1)f_0(x)f_1(x)\cdots f_{\ell-1}(x).
\end{eqnarray}
\end{lem}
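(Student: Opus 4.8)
The plan is to strip off the obvious monomial factor and reduce both identities to a cleaner telescoping pair. First I would write the polynomials out explicitly: since $f_\ell(x) = (x^{3^\ell})^2 + x^{3^\ell} + 1 = x^{2\cdot 3^\ell} + x^{3^\ell} + 1$, subtracting $1$ gives $f_\ell(x) - 1 = x^{3^\ell}(x^{3^\ell} + 1)$, and in the same way $g_\ell(x) - 1 = x^{3^\ell}(x^{3^\ell} - 1)$. Comparing against the claimed right-hand sides, the leading factor $x^{3^\ell}$ cancels, so the lemma is equivalent to the two polynomial identities $x^{3^\ell} + 1 = (x+1)\prod_{k=0}^{\ell-1} g_k(x)$ and $x^{3^\ell} - 1 = (x-1)\prod_{k=0}^{\ell-1} f_k(x)$.

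The engine for both is the elementary cubic factorization $y^3 \mp 1 = (y \mp 1)(y^2 \pm y + 1)$ applied with $y = x^{3^k}$. Since $f_k(x) = (x^{3^k})^2 + x^{3^k} + 1$ is exactly the quadratic factor attached to $y^3 - 1$, this reads $x^{3^{k+1}} - 1 = (x^{3^k} - 1)\,f_k(x)$, and symmetrically, since $g_k(x) = (x^{3^k})^2 - x^{3^k} + 1$ is the factor attached to $y^3 + 1$, we get $x^{3^{k+1}} + 1 = (x^{3^k} + 1)\,g_k(x)$. Each recursion advances the exponent from $3^k$ to $3^{k+1}$ at the cost of one extra factor $f_k$ (respectively $g_k$).

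I would then induct on $\ell$. The base case $\ell = 0$ has an empty product (equal to $1$), so the identities read $x - 1 = (x-1)$ and $x + 1 = (x+1)$. For the inductive step, I apply the recursion once to pull out $x^{3^{\ell+1}} - 1 = (x^{3^\ell} - 1)\,f_\ell(x)$, substitute the inductive hypothesis for $x^{3^\ell} - 1$, and absorb $f_\ell$ into the product; the $g$-statement is identical with the signs reversed. Every identity used has integer coefficients, so it holds verbatim after reduction modulo $p$, and in particular no congruence condition on $p$ is needed for this lemma (in contrast to the preceding irreducibility result).

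I do not expect a genuine obstacle here, consistent with its billing as a straightforward induction. The only real point of care is the opening reduction: recognizing that one should factor out $x^{3^\ell}$ and pass to the $x^{3^\ell} \pm 1$ towers, after which the telescoping is automatic. The mild bookkeeping subtleties are invoking the empty-product convention to make $\ell = 0$ work, and carrying the two statements in parallel, since the $f_k$ belong to the $x^{3^k}-1$ tower and the $g_k$ to the $x^{3^k}+1$ tower and the two recursions never interact.
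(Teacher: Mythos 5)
Your proof is correct and is exactly the "straightforward induction argument" the paper alludes to (the paper gives no further details): factor out $x^{3^\ell}$, reduce to the identities $x^{3^\ell}\pm 1 = (x\pm 1)\prod g_k$ (resp.\ $\prod f_k$), and telescope via $y^3\mp 1=(y\mp 1)(y^2\pm y+1)$ with $y=x^{3^k}$. The base case, the empty-product convention, and the observation that no condition on $p$ is needed are all handled correctly.
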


This lemma shows that $f_\ell(x)-1$ and $g_\ell(x)-1$ have very restricted factorizations, which allows us to prove our main result.

\begin{proof}[Proof of Theorem \ref{mainTheorem}]
Recall that a set $\mathcal{S}$ is an $F$-set if whenever $P\in\mathcal{S}$ and $Q$ then $Q\in\mathcal{S}$ if $Q$ is monic and irreducible and $Q|(P-\alpha_P)$, where $\alpha_P$ is the constant term of $P$. The set \begin{equation} \mathcal{F} \ :=\ \left\{f_{\ell}(x)\right\}_{\ell=0}^{\infty} \ \cup \ \{g_{\ell}(x)\}_{\ell=0}^{\infty} \ \cup \ \{x - n\}_{n=-1}^{1} \end{equation} is an infinite, nontrivial $F$-set. To see this, by Lemma \ref{lem:factorizationsfg} the factors of $f_\ell$ minus its constant term and $g_\ell$ minus its constant term are always in $\mathcal{F}$. All that remains is to check that the factors of the other terms in $\mathcal{F}$ (minus their constant terms) are in $\mathcal{F}$. This follows immediately by construction, as these remaining terms are simply $x$. Note all elements are monic and irreducible.

We must show that $\mathcal{F}$ is nontrivial. To do so, we simply must exhibit \emph{one} monic, irreducible polynomial that is not in $\mathcal{F}$. First consider $p=2$. Then $f_\ell(x)=g_\ell(x)$ for any $\ell$, so $\mathcal{F} = \left\{f_{\ell}(x)\right\}_{\ell=0}^{\infty} \ \cup \ \{x\} \ \cup \ \{x+1\}$. Now note that $x^3+x+1$ is irreducible over $\mathbb{F}_2$, but $x^3+x+1$ is not in $\mathcal{F}$. Thus $\mathcal{F}$ is an infinite, nontrivial $F$-set.

Now let $p > 2$ with $p \equiv 2$ or $5$ modulo $9$. Then $x+2 \in \mathbb{F}_p[x]$ is irreducible but not in $\mathcal{F}$, hence $\mathcal{F}$ is an infinite, nontrivial $F$-set.
\end{proof}



\end{document}